\documentclass[a4paper,twoside,reqno]{amsart}

\usepackage[utf8]{inputenc}


\usepackage{authblk,orcidlink}
\usepackage{hyperref}
\hypersetup{urlcolor=blue, citecolor=red}
\usepackage{amsmath, amsthm, amssymb, xcolor, bbm, mathrsfs, graphicx, float, enumitem, orcidlink, subcaption}
\usepackage[english]{babel}
\usepackage{fancyhdr}\setlength{\headheight}{23pt}
\pagestyle{fancy}
\fancyhf{}
\fancyhead[CE]{Hsin-Lun Li}
\fancyhead[CO]{Pairwise Attraction-Repulsion on Multilayer Social Networks}
\fancyfoot[CE,CO]{\thepage}

\usepackage[dvipsnames]{xcolor}

\newtheorem{theorem}{Theorem}
\newtheorem{lemma}[theorem]{Lemma}

\newcommand{\norm}[1]{\left\lVert#1\right\rVert}
\newcommand{\E}[1]{\mathbb{E}\left[#1\right]}

\newcommand{\Prob}{\mathbb{P}}

\title{Pairwise Attraction-Repulsion on Multilayer Social Networks}
\author{}
\date{}
\subjclass{91D30, 93A16, 60J20, 05C80}

\keywords{Multilayer networks, multi-agent systems, opinion dynamics, consensus and convergence, attraction--repulsion interactions, random matchings, stochastic network dynamics}
\email{hsinlunl@math.nsysu.edu.tw}

\begin{document}\sloppy

\thispagestyle{firstpage}
\maketitle
\begin{center}
    Hsin-Lun Li \orcidlink{0000-0003-1497-1599}
    \centerline{$^1$Department of Applied Mathematics, National Sun Yat-sen University, Kaohsiung 804, Taiwan}
\end{center}
\medskip

\begin{abstract}
We introduce a probabilistic pairwise \emph{attraction--repulsion} model for
opinion dynamics on multilayer social networks, in which agents hold
layer-specific states and interact through random matchings that couple
multiple, time-varying layers. At each time step, interacting pairs update
their layer-specific states using layer-dependent, time-varying interaction
rates and a random sign (attractive or repulsive), and the resulting updates
are averaged across layers. This framework generalizes classical gossip and
Deffuant-type models while capturing heterogeneous cross-layer influences and
antagonistic interactions.

Under mild graph-theoretic and moment assumptions, we establish almost sure
global consensus. Specifically, when the expected net effect of interactions is
strictly attractive and random matchings ensure sufficient cross-layer
connectivity, all agents’ layer states converge almost surely to the global
average. We further identify a purely attractive regime in which consensus
holds even under intermittent connectivity and without any moment assumptions
on the initial states. Numerical experiments illustrate the dynamical regimes
predicted by the theory, including consensus, metastability, and polarization.
Together, these results provide a rigorous foundation for understanding how
multilayer structure, stochastic interactions, and mixed-sign influence shape
collective outcomes in social and engineered networked systems.
\end{abstract}

\section{Introduction}

Understanding how opinions evolve within societies is a central problem in
network science, statistical physics, and applied mathematics. Classical studies
have shown that the interplay between interaction rules and network topology can
produce rich emergent behavior, including consensus, polarization, clustering,
and persistent disagreement (see, e.g., the monographs by
Barrat et al.~\cite{barrat2008dynamical} and Latora et al.~\cite{latora2017complex}).
Fundamental opinion-dynamics models such as those of Deffuant
et al.~\cite{deffuant2000mixing} and Hegselmann~\cite{hegselmann2015opinion}
demonstrate how bounded confidence or averaging rules shape collective outcomes.
When the confidence threshold in the Deffuant model is removed, the dynamics
coincide with the randomized gossip algorithm of Boyd et
al.~\cite{boyd2006randomized}, in which randomly selected agent pairs perform
unconditional averaging. Despite their broad influence, these classical models
typically operate on a single network layer and rely exclusively on attractive
interactions.

Recent advances in complex-systems research emphasize that human interactions
are inherently \emph{multilayered}, reflecting diverse social contexts---family,
workplace, online platforms---and therefore cannot be faithfully represented by a
single graph. The development of multilayer network
theory~\cite{boccaletti2014structure,kivela2014multilayer} has revealed that
coupling multiple layers can generate phenomena that are irreducible to
single-layer dynamics. Opinion formation on such structures has been studied in
voter-like models~\cite{diakonova2016irreducibility}, majority-rule
dynamics~\cite{choi2019majority}, opinion--disease coevolution~\cite{velasquez2017interacting},
opinion-based centrality~\cite{reiffers2017opinion}, and spin-based interaction
systems with link overlap~\cite{kim2021link}. These works show that multiplexity
can lead to discontinuous transitions, nonmonotonic convergence times, and
strong inter-layer correlations.

Parallel lines of research have examined higher-order, interdependent, and
multi-topic interactions, which often produce qualitatively new dynamical
regimes~\cite{lambiotte2019networks}. Continuous-time and multi-dimensional
extensions of classical opinion models further highlight the need for flexible
frameworks capable of capturing heterogeneous and nonstandard interaction
rules~\cite{ye2020continuous,suo2008dynamics}. Meanwhile, modern
distributed-systems perspectives on asynchronous
updates~\cite{berenbrink2024asynchronous} and mean-field analyses of
non-consensus models~\cite{liu2024mean} underscore the importance of studying
settings where convergence is nontrivial or even absent.

A key limitation of most existing opinion models is their reliance on purely
\emph{attractive} interactions—agents move toward one another’s views. Yet
real social interactions frequently involve \emph{repulsion}, in which
individuals distance themselves from those with sharply diverging opinions,
reflecting negative influence, contrarian behavior, or social antagonism.
Although attraction--repulsion dynamics appear implicitly in some
bounded-confidence and spin-based models, a general analytical treatment on
multilayer, dynamically evolving networks remains largely unexplored.

\subsection*{Our contribution}

In this work, we propose a multilayer attraction--repulsion model on a
collection of time-varying social graphs, with one graph per layer. At each
time step, a random matching is selected, and each interacting pair updates
its layer-specific states according to a stochastic rule that allows for both
attraction and repulsion. Interaction rates are layer-dependent and may vary
over time, capturing heterogeneous and context-dependent social influences.

The main contributions of the paper are as follows:

\begin{enumerate}
   \item \textbf{Almost sure convergence to the global average.}
   We establish two complementary consensus results. First, under mixed
   attraction and repulsion, agents converge almost surely to the global
   average provided that the expected net effect of interactions is strictly
   attractive and suitable connectivity conditions are satisfied. This result
   requires only mild moment assumptions on the initial opinions. Second, when
   all interactions are purely attractive, almost sure convergence to the
   global average is guaranteed even for heavy-tailed initial states,
   including Cauchy distributions, without any moment assumptions. Both
   results accommodate stochastic, time-varying social graphs and cross-layer
   interactions.

   \item \textbf{A unified mathematical framework.}
   We introduce a single attraction--repulsion mechanism on multilayer
   networks that captures consensus, polarization, and metastable behavior
   within a unified mathematical framework. The model generalizes classical
   gossip and Deffuant-type dynamics and extends them to heterogeneous,
   antagonistic, and cross-layer-coupled settings.

   \item \textbf{Simulation results linking theory to emergent behavior.}
   We complement the theoretical analysis with numerical experiments on large
   synthetic networks, illustrating how the predicted dynamical regimes—
   consensus, metastability, and polarization—emerge across parameter space.
   These simulations highlight the interplay between multilayer structure and
   mixed-sign interactions in shaping collective outcomes.
\end{enumerate}

This work aims to bridge classical consensus theory—such as the foundational
results of Jadbabaie et al.~\cite{jadbabaie2003coordination} and the broader
families of averaging models~\cite{flache2017models}—with modern multilayer and
antagonistic interaction structures. By developing a flexible
attraction--repulsion mechanism on multiplex networks, we provide a unifying
and mathematically rigorous framework for understanding how network structure,
layer heterogeneity, and interaction polarity jointly shape opinion formation.

\section*{Model Description}

Opinion dynamics in contemporary societies involve inherently multilayered and stochastic interactions, as individuals engage across diverse social contexts and adjust their behavior according to each setting. We formalize a \emph{pairwise attraction–repulsion mechanism} on multilayer social networks, where agents adjust their opinions according to stochastic attraction or repulsion, with layer-specific interaction rules.

Consider a set of \( n \) agents, each participating in \( m \) distinct layers. Let \( x_{ki}(t) \in \mathbb{R}^d \) denote the opinion of agent \( i \) in layer \( k \) at discrete time \( t \). Let \( G_{k,t} = ([n], E_{k,t}) \) denote the simple, undirected social graph in layer \( k \) at time $t$, with vertex set \( [n] = \{1, \ldots, n\} \) and edge set
\[
E_{k,t} = \{(i,j) : \text{agents } i \text{ and } j \text{ are socially connected at time } t\}.
\]
A \emph{matching} of a graph \( G = (V,E) \) is a set of pairwise disjoint edges in \( E \). At each time step \( t \), a matching \( U_t \) of the complete graph of order \( n \) is selected, where \( \{U_t\}_{t \ge 0} \) are independent and identically distributed random variables with support
\[
S \subset \{\text{all matchings in the complete graph of order } n\}.
\]

Define
\[
L_{(i,j),t} = \{ k : (i,j) \in E_{k,t} \}, \quad (i,j) \in [n]^2,
\]
the set of layers containing the social edge \( (i,j) \) at time~\( t \), and
\[
M_t = \{ (i,j) \in U_t : L_{(i,j),t} \neq \emptyset \}.
\]

For each \( (i,j) \in M_t \), agents \( i \) and \( j \) update their opinions toward or away from each other at layer-dependent rates \( \mu_{\ell,(i,j)}(t) \in (0,1/2] \) for all \( \ell \in L_{(i,j),t} \). The updated opinions are averaged across all relevant layers:
\begin{equation}
\begin{aligned}
x_{\ell i}(t+1) &= \frac{1}{|L_{(i,j),t}|} \sum_{k \in L_{(i,j),t}} \Big[ x_{ki}(t) + r_{k,(i,j)}(t) \big( x_{kj}(t) - x_{ki}(t) \big) \Big], \\
x_{\ell j}(t+1) &= \frac{1}{|L_{(i,j),t}|} \sum_{k \in L_{(i,j),t}} \Big[ x_{kj}(t) + r_{k,(i,j)}(t) \big( x_{ki}(t) - x_{kj}(t) \big) \Big],
\end{aligned}
\label{Eq:Model}
\end{equation}
where
\[
r_{k,(i,j)}(t) =
\begin{cases}
\mu_{k,(i,j)}(t), & \text{with probability } \theta_{k,(i,j)}(t), \\
-\mu_{k,(i,j)}(t), & \text{with probability } 1 - \theta_{k,(i,j)}(t).
\end{cases}
\]

Let \( (M_t, F_t) \) be the undirected graph with vertex set \( M_t \) and edge set
\[
F_t = \{ (e_1, e_2) \in M_t^2 : e_1 \neq e_2 \text{ and } L_{e_1,t} \cap L_{e_2,t} \neq \emptyset \}.
\]
We say that \( M_t \) forms a \emph{connected collection of all layers} if \( \bigcup_{e \in M_t} L_{e,t} = [m] \) and \( (M_t, F_t) \) is connected. A graph is \( \delta \)-\emph{trivial} if the distance between any two vertices is at most~\( \delta \).

For simplicity, we introduce the following notation. Given a family of edges
\(X_{i,t} \subset E_{i,t}\), define
\[
\mathscr{A}(X_{i,t})
= \inf_{t \ge 0}\;
\min_{\, i \in [m],\, (p,q) \in \left(\bigcup_{a \in S} a\right) \cap X_{i,t}}
\mathbb{E}\!\left[\mu_{i,(p,q)}(t)\,\bigl(2\theta_{i,(p,q)}(t) - 1 - \mu_{i,(p,q)}(t)\bigr)\right].
\]
Moreover, define
\[
\mu(X_{i,t})=
\inf_{t \ge 0}\;
\min_{i \in [m],\, (p,q) \in \left(\bigcup_{a \in S} a\right) \cap X_{i,t}}
\mu_{i,(p,q)}(t),
\]
and
\[
\theta(X_{i,t})=
\inf_{t \ge 0}\;
\min_{i \in [m],\, (p,q) \in \left(\bigcup_{a \in S} a\right) \cap X_{i,t}}
\theta_{i,(p,q)}(t).
\]

We call an edge $(p,q)$ \emph{admissible} if $(p,q)\in\bigcup_{a\in S} a$, that is,
if it appears in at least one matching in the support of the matching process.
The quantity $\mathscr{A}(X_{i,t})$ represents the \emph{uniform worst-case
expected net attraction} generated by interactions along admissible edges in
$X_{i,t}$, taken uniformly over all layers and all times. In particular,
$\mathscr{A}(X_{i,t})>0$ ensures a uniform positive lower bound on the expected
net effect of interactions along admissible edges in $X_{i,t}$, across all
layers and times.

Similarly, $\mu(X_{i,t})$ and $\theta(X_{i,t})$ capture uniform lower bounds on
the interaction strength and the attraction probability, respectively, taken
over all admissible edges, layers, and times. Together, these quantities provide
compact sufficient conditions that rule out vanishing interaction strength and
degenerately repulsive behavior in the long run.

Compared to classical consensus or Deffuant models, our framework generalizes:
(i) layer-dependent and time-varying interaction rates \(\mu_{k,(i,j)}(t)\), 
(ii) stochastic attraction/repulsion, 
(iii) interactions determined by random matchings, and 
(iv) cross-layer averaging of opinions. This formulation enables the study of rich multilayer phenomena, including partial consensus, clustering, oscillatory dynamics, and metastable repulsive states.

\section*{Main Results}

We now present the main theoretical results for consensus in the pairwise
attraction--repulsion model on multilayer social networks. These theorems
establish conditions under which agents converge to a common opinion, even in
the presence of time-varying interaction strengths, stochastic attraction and
repulsion, and multiple interaction layers.  

\begin{theorem}[Almost Sure Consensus under Mixed Interactions]
\label{Thm:consensus}
Let \( W_t = \sum_{i \in [m],\, j \in [n]} \|x_{ij}(t)\|^2 \). Assume that
\begin{itemize}
  \item the initial opinions have finite second moments, i.e., 
  \( \mathbb{E}[W_{0}] < \infty \),

  \item the union of all matchings in the support contains the edges of the
  spanning forests of all layers:
  \[
  \bigcup_{a \in S} a \supset 
  \bigcup_{i \in [m],\, t \ge 0} E\big(\mathcal{F}(G_{i,t})\big),
  \]

  \item the time-varying attraction--repulsion parameters satisfy
  \[ 
  \mathscr{A}(E_{i,t})\ge 0\quad\hbox{and}\quad\mathscr{A}\left(E\big(\mathcal{F}(G_{i,t})\big)\right)>0,
  \]

  \item for all sufficiently large \(t\), all social graphs \(G_{k,t}\), \(k \in
  [m]\), are connected, and \(M_t\) forms a connected collection of all layers.
\end{itemize}

Then the opinions of all agents converge almost surely to the global average:
\[
x_{ij}(t) \to \frac{1}{nm} \sum_{p \in [m],\, q \in [n]} x_{pq}(0)
\quad \text{as } t \to \infty,
\]
for all \( i \in [m] \) and \( j \in [n] \).
\end{theorem}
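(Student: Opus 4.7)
\emph{Conservation and supermartingale.} Summing the two lines of \eqref{Eq:Model} over $\ell\in L_{(i,j),t}$ for an interacting pair shows that $\sum_{\ell\in L_{(i,j),t}}[x_{\ell i}(t+1)+x_{\ell j}(t+1)]=\sum_{k\in L_{(i,j),t}}[x_{ki}(t)+x_{kj}(t)]$, because the signed coefficients $r_{k,(i,j)}(t)$ cancel between the $i$- and $j$-equations. Hence $\bar x:=(nm)^{-1}\sum_{k,j}x_{kj}(0)$ is a deterministic constant, and after the affine shift $x_{kj}\mapsto x_{kj}-\bar x$ it suffices to prove $W_t\to 0$ almost surely. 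Writing $s_k=(x_{ki}+x_{kj})/2$ and $d_k=(x_{ki}-x_{kj})/2$ for $k\in L:=L_{(i,j),t}$, the update collapses to $x_{\ell i}(t+1)=\bar s+T$ and $x_{\ell j}(t+1)=\bar s-T$ with $\bar s=|L|^{-1}\sum_k s_k$ and $T=|L|^{-1}\sum_k(1-2r_{k,(i,j)}(t))d_k$. A direct expansion and the parallelogram identity show that the per-pair contribution $-\Delta_{ij}$ to $W_t-W_{t+1}$ equals $2\sum_{k\in L}\norm{s_k-\bar s}^2+2\bigl(\sum_{k\in L}\norm{d_k}^2-|L|\norm{T}^2\bigr)$. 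Jensen's inequality bounds $|L|\norm{T}^2\le\sum_k(1-2r_k)^2\norm{d_k}^2$, and the identity $\E{r(1-r)}=\mu(2\theta-1-\mu)$ for the signed Bernoulli $r$ yields
\[
\Ec{-\Delta_{ij}}{\mathcal F_t}\ \ge\ 2\sum_{k\in L}\norm{s_k-\bar s}^2+2\sum_{k\in L}\mu_{k,(i,j)}(t)\bigl(2\theta_{k,(i,j)}(t)-1-\mu_{k,(i,j)}(t)\bigr)\norm{x_{ki}-x_{kj}}^2.
\]
Under $\mathscr A(E_{i,t})\ge 0$ both terms on the right are non-negative, so $W_t$ is a non-negative supermartingale; combined with $\E{W_0}<\infty$ and Doob's theorem, $W_t\to W_\infty$ almost surely.

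\emph{Summability forces forest-edge vanishing.} With $Z_t:=\Ec{W_t-W_{t+1}}{\mathcal F_t}\ge 0$, telescoping gives $\sum_t\E{Z_t}\le\E{W_0}<\infty$, hence $\sum_t Z_t<\infty$ almost surely and $Z_t\to 0$. Since $U_t$ is i.i.d.\ with finite support, every admissible edge $(p,q)$ satisfies $\Prob((p,q)\in M_t)\ge\pi_*>0$ uniformly in $t$; together with the assumption that every spanning-forest edge is admissible and with $\mathscr A_*:=\mathscr A(E(\mathcal F(G_{i,t})))>0$, the conditional lower bound above aggregates to
\[
Z_t\ \ge\ 2\pi_*\mathscr A_*\sum_{k\in[m]}\sum_{(p,q)\in E(\mathcal F(G_{k,t}))}\norm{x_{kp}(t)-x_{kq}(t)}^2+2\mathcal D_t,
\]
where $\mathcal D_t\ge 0$ collects the expected cross-layer dispersion contributions $\norm{s_k-\bar s}^2$. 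Both non-negative pieces on the right must therefore vanish almost surely as $t\to\infty$.

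\emph{Propagation to global consensus.} For $t$ sufficiently large each $G_{k,t}$ is connected, so $\mathcal F(G_{k,t})$ is a spanning tree and any $p,q$ are joined by a path of length $\le n-1$; the triangle inequality then upgrades forest-edge vanishing to $\max_{p,q}\norm{x_{kp}(t)-x_{kq}(t)}\to 0$, establishing per-layer consensus $x_{kp}(t)\to c_k(\omega)$ for each $k$. The vanishing of $\mathcal D_t$, combined with the connected-collection structure of $(M_t,F_t)$ and coverage of all layers by $M_t$ for large $t$, then chains $c_k=c_{k'}$ across every pair of layers, yielding a single common limit, which is pinned down to $\bar x$ by the conservation of the global mean.

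The main obstacle is this last propagation step: translating the $L^2$-type summability of forest-edge squared differences into pointwise consensus when the spanning forests, the matchings, and the layer graphs are all random and time-varying. Because one cannot fix an edge and argue edge-by-edge, the argument must couple the intra-layer connectivity of each $G_{k,t}$ with the inter-layer connectivity of $(M_t,F_t)$ and the cross-layer dispersion estimate, so that the vanishing obtained for time-dependent forest edges is transported along time-dependent graph paths and merged into a single global limit across layers.
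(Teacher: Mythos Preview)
Your supermartingale analysis is correct and in fact sharper than the paper's: your exact identity
$-\Delta_{ij}=2\sum_{k\in L}\|s_k-\bar s\|^2+2(\sum_k\|d_k\|^2-|L|\|T\|^2)$
separates an \emph{inter-layer dispersion} term from the attraction term, whereas the paper keeps only the latter (its Lemma~3). The summability argument $\sum_t Z_t<\infty$ is also a cleaner substitute for the paper's Borel--Cantelli contradiction in Lemma~5, and it legitimately forces both the forest-edge squared differences and $\mathcal D_t$ to vanish.

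The gap is in your propagation step. From per-layer diameter $\to 0$ you jump to ``$x_{kp}(t)\to c_k(\omega)$'' and then argue $c_k=c_{k'}$; but diameter shrinking does \emph{not} by itself yield convergence of the individual $x_{kp}(t)$, so no $c_k$ is available to compare. What you actually need (and what the paper proves) is that the \emph{global} diameter $\max_{i_1,i_2,j_1,j_2}\|x_{i_1j_1}(t)-x_{i_2j_2}(t)\|\to 0$; conservation of $\bar x$ then finishes the argument without ever invoking layer-wise limits. Your last paragraph correctly identifies this as the ``main obstacle'' but does not close it: you still owe a concrete chaining bound that converts (i) per-layer $\epsilon$-triviality and (ii) $\mathcal D_t\to 0$ for every matching in $S$ into a bound on $\|x_{i_1j_1}(t)-x_{i_2j_2}(t)\|$ along a path in the (time-$t$) graph $(M_t,F_t)$. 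This can be done---$\mathcal D_t\to 0$ gives $s_k\approx s_{k'}$ for $k,k'\in L_{(i,j),t}$, and per-layer triviality upgrades this to $x_{ki}\approx x_{k'i}$---but it is not written.

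The paper sidesteps your dispersion mechanism entirely by exploiting a pointwise algebraic identity you overlooked: from \eqref{Eq:Model}, for every $\ell,\ell'\in L_{(i,j),t}$ one has $x_{\ell i}(t{+}1)=x_{\ell' i}(t{+}1)$ \emph{exactly}, since the right-hand side is independent of $\ell$. Thus after any update, an agent's opinions are identical across all layers of the pair it belongs to, and chaining along a path in $(M_t,F_t)$ at a single time $t{+}1$ costs only within-layer distances (already $\epsilon$-small). This is simpler than routing through $\mathcal D_t$, and it makes the bound at time $t{+}1$ completely explicit. Either mechanism works, but yours needs the missing chaining estimate spelled out before the proof is complete.
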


\begin{theorem}[Consensus under Pure Attraction]
\label{Thm:consensus under attraction only}
Assume that
\begin{itemize}
  \item the union of all matchings in the support contains the edges of the
  spanning forests of all layers:
  \[
  \bigcup_{a \in S} a \supset 
  \bigcup_{i \in [m],\, t \ge 0} E\big(\mathcal{F}(G_{i,t})\big),
  \]

  \item all interactions are attractive, i.e., \(\theta(E_{i,t})=1\), 
  
  \item the time-varying interaction strengths satisfy \( \mu\left(E\big(\mathcal{F}(G_{i,t})\big)\right)>0,\)

  \item for infinitely many \(t\), all social graphs \(G_{k,t}\), \(k \in [m]\),
  are connected, and \(M_t\) forms a connected collection of all layers.
\end{itemize}

Then the opinions of all agents converge almost surely to the global average:
\[
x_{ij}(t) \to \frac{1}{nm} \sum_{p \in [m],\, q \in [n]} x_{pq}(0)
\quad \text{as } t \to \infty,
\]
for all \(i \in [m]\) and \(j \in [n]\).
\end{theorem}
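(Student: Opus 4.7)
The plan is to exploit the fact that under pure attraction ($\theta \equiv 1$, so $r_{k,(i,j)}(t) = \mu_{k,(i,j)}(t) \in (0,1/2]$) each one-step update is a genuine convex combination, so the argument can be carried out pathwise without any moment assumption. First I would verify conservation of the cross-layer global mean $\bar{x} = (nm)^{-1}\sum_{i,j} x_{ij}(0)$ by summing \eqref{Eq:Model} over $\ell \in L_{(i,j),t}$ for each matched pair: the contribution of $(i,j)$ telescopes to $\sum_{k \in L_{(i,j),t}}(x_{ki}(t)+x_{kj}(t))$, and unmatched agent--layer coordinates are untouched. Then, by linearity, I would reduce to scalars via projection: for any unit vector $v \in \mathbb{R}^d$ the values $y_{ij}(t) = v \cdot x_{ij}(t)$ satisfy the same recursion, and $y_{\ell i}(t+1)$ is a convex combination of $\{y_{ki}(t), y_{kj}(t) : k \in L_{(i,j),t}\}$. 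Consequently the range $D_v(t) = \max_{i,j} y_{ij}(t) - \min_{i,j} y_{ij}(t)$ is pathwise non-increasing and converges almost surely from the a.s.\ finite initial value $D_v(0)$ to a limit $D_v(\infty) \ge 0$.

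The heart of the proof is showing $D_v(\infty) = 0$ a.s. I would argue by contradiction: suppose $\mathbb{P}(D_v(\infty) > \delta) > 0$ for some $\delta > 0$. Writing the joint dynamics as $X(t+1) = A_t X(t)$ with $A_t$ a random row-stochastic matrix indexed by $[m]\times[n]$, the hypothesis $\mu\left(E\big(\mathcal{F}(G_{i,t})\big)\right) > 0$ gives a uniform positive lower bound on every positive entry of $A_t$ along admissible spanning-forest edges, while the support hypothesis $\bigcup_{a\in S} a \supset \bigcup_{i,t} E\big(\mathcal{F}(G_{i,t})\big)$ ensures that every such edge is realized by $U_t$ with positive probability. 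At a good time $\tau$ (all $G_{k,\tau}$ connected and $M_\tau$ a connected collection of all layers), any matching drawn from $S$ couples each matched pair across its shared-layer set, and the connected-collection structure makes the layer linkages available. Using these ingredients I would construct an ordered schedule of $K = K(n,m)$ admissible matchings in $S$ whose union, viewed as a graph on $[m]\times[n]$, is connected and spans every agent--layer vertex through chains of matched pairs linked by shared layers. By the i.i.d.\ property of $\{U_t\}$ and the fact that good times recur infinitely often, each block of $K$ consecutive good times realizes this schedule with conditional probability at least $p_0 > 0$, and on that event a standard Dobrushin/ergodicity-coefficient estimate for the composed product yields $D_v(\cdot) \le (1-\eta) D_v(\tau)$ for a fixed $\eta \in (0,1)$ depending only on $n, m$, and the uniform lower bound on $\mu$.

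Partitioning the (a.s.\ infinite) sequence of good times into disjoint length-$K$ blocks and invoking the conditional Borel--Cantelli lemma, the contracting event occurs in infinitely many blocks, forcing $D_v(t) \to 0$ a.s.\ and contradicting the assumption. Combined with conservation of the mean and the reduction to projections across all unit $v$, this gives $x_{ij}(t) \to \bar{x}$ a.s.\ for every $i, j$. The main obstacle I anticipate lies in the third step: obtaining $K$ and $\eta$ uniformly in the state and the starting time. A single matching only couples disjoint matched pairs, each equalized across its own layer set, so cross-pair mixing must be manufactured by chaining good times; the connected-collection-of-all-layers hypothesis is precisely what makes the required cross-layer linkages available with positive probability at each good step, but the combinatorial bookkeeping of assembling a connected spanning structure on $[m]\times[n]$ out of $O(1)$ admissible matchings, while tracking the induced Dobrushin contraction through the product and handling possible time-dependence of the conditional matching distribution at good times, is where the real technical effort will go.
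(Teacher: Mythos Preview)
Your approach is genuinely different from the paper's, and as written it has a real gap in the contraction step.

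The paper does \emph{not} attempt a direct Dobrushin contraction of the full diameter over blocks of good times. Instead it works in two stages. First, using the Lyapunov function $W_t=\sum_{i,j}\|x_{ij}(t)\|^2$, which under $\theta\equiv 1$ is pathwise nonincreasing, Lemma~\ref{Lemma:supermartingale} gives
\[
W_t-W_{t+1}\;\ge\;2\Bigl(\mu\bigl(E(\mathcal{F}(G_{i,t}))\bigr)\Bigr)^{2}\delta^{2}
\]
whenever an admissible spanning-forest edge $(p,q)$ with $\|x_{kp}(t)-x_{kq}(t)\|>\delta$ lies in $M_t$. Since each such edge is in $U_t$ with positive probability (second Borel--Cantelli) and $W_t$ converges, this forces every component of every $G_{k,t}$ to become $\epsilon$-trivial from some time on (Lemma~\ref{Lemma:Delta-trivialityUnderAttractionOnly}). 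Only \emph{then} does the paper invoke a \emph{single} good time $s_1$: because all layers are already internally $\delta/(n+2)$-close and $M_{s_1}$ is a connected collection, a chain in $(M_{s_1},F_{s_1})$ of length at most $n+1$ shows every pair of updated opinions at time $s_1+1$ is within $\delta$, and Lemma~\ref{Lemma: delta-trivial of all opinions} freezes this forever.

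Your scheme runs into trouble because the ``good time'' condition \emph{already constrains the realized matching}: $M_t$ forming a connected collection of all layers is an event in $U_t$, so once you condition on being at a good time you cannot further claim that $U_t$ realizes your prescribed schedule with probability $p_0>0$. Even setting this aside, a single matching (good or not) leaves all unmatched agent--layer coordinates untouched, so the one-step stochastic matrix $A_t$ has many rows equal to unit vectors; there is no reason the product over $K$ good times is scrambling unless you also control the within-layer mixing that happens at \emph{non}-good times, which your block argument does not do. The paper's two-stage decomposition---within-layer triviality via $W_t$ at arbitrary times, then a single good time for cross-layer linkage---is precisely what circumvents both obstacles.
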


These theorems provide a rigorous foundation for understanding consensus
formation in realistic multilayer social networks with stochastic and
time-varying interactions. In Theorem~\ref{Thm:consensus}, the assumption that the
initial opinions have finite second moments ensures that agents’ starting
opinions are not excessively dispersed. The requirement that the union of
matchings in the support contains the spanning forests of all layers, together
with the connectivity of the social graphs and the formation of connected
collections of matchings, guarantees that information can propagate both within
and across layers.

The interaction assumptions further distinguish the two results.
In Theorem~\ref{Thm:consensus}, interactions may involve both attraction and
repulsion; however, the expected net effect along admissible edges is required
to be strictly attractive. This condition ensures that, on average, pairwise
interactions drive agents toward one another, leading to almost sure consensus.
In contrast, Theorem~\ref{Thm:consensus under attraction only} assumes purely
attractive interactions, which allows the connectivity requirements to be
relaxed: consensus is achieved even when social graphs are only intermittently
connected, provided such connectivity occurs infinitely often.

Taken together, these results demonstrate that global consensus can emerge in
multilayer systems even under heterogeneous, stochastic, and time-varying
interactions. Theorem~\ref{Thm:consensus} highlights robustness to the presence
of mixed attraction and repulsion, showing that occasional repulsive
interactions do not prevent consensus as long as the average influence remains
attractive. In contrast, Theorem~\ref{Thm:consensus under attraction only}
shows that purely attractive interactions suffice to guarantee consensus under
weaker connectivity requirements. This framework therefore extends classical
consensus and Deffuant-type models by incorporating multilayer structure,
probabilistic repulsion, and random pairwise interactions, offering practically
meaningful insights into opinion dynamics in modern, complex social networks.

\section{Properties of the model}

We establish key mathematical properties of the multilayer
attraction--repulsion model that underpin the main results presented
earlier. We begin by recalling basic convexity and inequality results that
facilitate the analysis of opinion updates. We then construct a suitable
supermartingale capturing the evolution of opinion dispersion across all
agents and layers, which allows us to rigorously control the effects of
stochastic, time-varying, and potentially repulsive interactions. Building
on this framework, we prove lemmas that ensure the eventual triviality of
all connected components in the social graphs, both under mixed-sign
interactions and under purely attractive dynamics. These results collectively
provide the technical foundation for the almost sure convergence of all
agents’ opinions to the global average.

\begin{lemma}\label{Lemma:convex function}
The function \( f(x) = \|x\|^2 \), where \( x \in \mathbb{R}^d \), is convex.
\end{lemma}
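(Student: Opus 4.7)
The plan is to prove convexity directly from the definition, since $f(x)=\|x\|^2$ is a polynomial in the inner product and admits a completely elementary verification. Fix $\lambda\in[0,1]$ and $x,y\in\mathbb{R}^d$. I would expand
\[
\|\lambda x+(1-\lambda)y\|^2
=\lambda^2\|x\|^2+2\lambda(1-\lambda)\langle x,y\rangle+(1-\lambda)^2\|y\|^2,
\]
using the bilinearity of the standard inner product on $\mathbb{R}^d$ and the identity $\|z\|^2=\langle z,z\rangle$.

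The next step is to compute the gap between the chord and the graph:
\[
\lambda\|x\|^2+(1-\lambda)\|y\|^2-\|\lambda x+(1-\lambda)y\|^2,
\]
substitute in the expansion above, and collect terms. After factoring $\lambda(1-\lambda)$, the expression reduces to $\lambda(1-\lambda)\bigl(\|x\|^2-2\langle x,y\rangle+\|y\|^2\bigr)=\lambda(1-\lambda)\|x-y\|^2$, which is manifestly nonnegative since $\lambda(1-\lambda)\ge 0$ on $[0,1]$ and the squared norm is nonnegative. Rearranging gives the desired midpoint-type inequality, establishing convexity.

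There is essentially no obstacle in this argument; the lemma is a standard fact recorded here because it will be applied repeatedly in the subsequent construction of the supermartingale governing $W_t$. If a more conceptual proof were preferred, one could alternatively note that $f$ is $C^2$ with Hessian $\nabla^2 f(x)=2I$, which is positive definite, and invoke the second-order characterization of convexity; but the direct algebraic route above is self-contained and gives the sharp identity $\lambda\|x\|^2+(1-\lambda)\|y\|^2-\|\lambda x+(1-\lambda)y\|^2=\lambda(1-\lambda)\|x-y\|^2$, which may itself be useful later when estimating the expected decrease of $W_t$ along interaction edges.
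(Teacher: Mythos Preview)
Your proof is correct and takes essentially the same approach as the paper: both expand $\|\lambda x+(1-\lambda)y\|^2$ via bilinearity of the inner product and reduce the convexity inequality to the nonnegativity of $\|x-y\|^2$. Your version presents the computation as a direct identity $\lambda\|x\|^2+(1-\lambda)\|y\|^2-\|\lambda x+(1-\lambda)y\|^2=\lambda(1-\lambda)\|x-y\|^2$, while the paper writes the same algebra as a chain of equivalences, but the content is identical.
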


\begin{proof}
For all \( t \in (0,1) \) and \( x, y \in \mathbb{R}^d \), observe that
\begin{align*}
    &\| (1 - t) x + t y \|^2 \le (1 - t) \| x \|^2 + t \| y \|^2 \\
    &\iff (1 - t)^2 \| x \|^2 + t^2 \| y \|^2 + 2 t (1 - t) \langle x, y \rangle \le (1 - t) \| x \|^2 + t \| y \|^2 \\
    &\iff (1 - t) t \| x \|^2 + t (1 - t) \| y \|^2 - 2 t (1 - t) \langle x, y \rangle \ge 0 \\
    &\iff \| x \|^2 + \| y \|^2 - 2 \langle x, y \rangle \ge 0 \\
    &\iff \| x - y \|^2 \ge 0.
\end{align*}
Therefore, the function \( f(x) = \| x \|^2 \) on \( \mathbb{R}^d \) is convex. This completes the proof.
\end{proof}

\begin{lemma}[Jensen's inequality]\label{Lemma:Jensen's inequality}
Assume that a function \( f(x) \) defined on \( D \subset \mathbb{R}^d \) is convex. Let \( x_i \in \mathbb{R}^d \) and \( t_i > 0 \) for all \( i \in [n] \), with \( \sum_{i \in [n]} t_i = 1 \). Then,
\[
f\left( \sum_{i \in [n]} t_i x_i \right) \le \sum_{i \in [n]} t_i f(x_i).
\]
\end{lemma}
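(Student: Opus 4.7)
The plan is to prove Jensen's inequality by induction on $n$, bootstrapping from the two-point convexity inequality, which is available because Lemma~\ref{Lemma:convex function} (or more directly, the definition of convexity assumed in the hypothesis) gives exactly the $n=2$ case. The $n=1$ case is a trivial equality.

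For the inductive step, assuming the inequality for all $(n-1)$-term convex combinations, I would take an $n$-term combination with positive weights $t_i$ summing to $1$ and peel off the last point by writing
\[
\sum_{i \in [n]} t_i x_i \;=\; (1-t_n)\sum_{i \in [n-1]} \frac{t_i}{1-t_n}\, x_i \;+\; t_n x_n .
\]
Under the standing assumption $t_i > 0$ and $n \ge 2$, we have $t_n \in (0,1)$, so the coefficients $t_i/(1-t_n)$ are positive and sum to $1$, making the inner sum a legitimate $(n-1)$-term convex combination. Applying the two-point convexity inequality to the outer decomposition gives
\[
f\!\left(\sum_{i \in [n]} t_i x_i\right) \;\le\; (1-t_n)\, f\!\left(\sum_{i \in [n-1]} \frac{t_i}{1-t_n}\, x_i\right) + t_n f(x_n),
\]
and the inductive hypothesis bounds the first term on the right by $(1-t_n)\sum_{i \in [n-1]} \frac{t_i}{1-t_n} f(x_i) = \sum_{i \in [n-1]} t_i f(x_i)$, yielding the claim.

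There is no serious obstacle here: the argument is standard, and the strict positivity hypothesis $t_i > 0$ eliminates the only delicate case (division by $1-t_n$), so the induction goes through cleanly for every $n \ge 1$.
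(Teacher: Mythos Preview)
Your proof is correct and follows essentially the same approach as the paper: induction on $n$, with the inductive step peeling off $x_n$ via the decomposition $\sum_{i\in[n]} t_i x_i = (1-t_n)\sum_{i\in[n-1]}\frac{t_i}{1-t_n}x_i + t_n x_n$, applying two-point convexity to the outer combination and the inductive hypothesis to the inner one. The paper's proof is identical in structure and detail.
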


\begin{proof}
Since \( f \) is convex on \( D \), the inequality holds for \( n = 2 \). For \( n > 2 \),
\begin{align*}
f\left( \sum_{i \in [n]} t_i x_i \right)
&= f\left( (1 - t_n) \sum_{i \in [n-1]} \frac{t_i}{1 - t_n} x_i + t_n x_n \right) \\
&\le (1 - t_n) f\left( \sum_{i \in [n-1]} \frac{t_i}{1 - t_n} x_i \right) + t_n f(x_n) \\
&\le (1 - t_n) \sum_{i \in [n-1]} \frac{t_i}{1 - t_n} f(x_i) + t_n f(x_n) \\
&= \sum_{i \in [n]} t_i f(x_i).
\end{align*}
By induction, the statement holds for all \( n \ge 2 \). It is clear that the inequality holds for \( n = 1 \), so the result is true for all \( n \ge 1 \).
\end{proof}

\begin{lemma}\label{Lemma:supermartingale}
Let \( W_t = \sum_{i \in [m],\, j \in [n]} \|x_{ij}(t)\|^2 \). Then,
\[
W_t - W_{t+1} \ge 2 \sum_{(p,q) \in M_t}
\sum_{i \in L_{(p,q),t}} r_{i,(p,q)}^2(t) \left( r_{i,(p,q)}^{-1}(t) - 1 \right) \| x_{iq}(t) - x_{ip}(t) \|^2,
\]
and
\begin{align*}
\E{ W_t - W_{t+1} \mid M_t }
&\ge
2 \sum_{(p,q) \in M_t}
\sum_{i \in L_{(p,q),t}}
\E{ \mu_{i,(p,q)}(t) \left( 2 \theta_{i,(p,q)}(t) - 1 - \mu_{i,(p,q)}(t) \right) } \\
&\hspace{3.5cm} \times \E{ \| x_{iq}(t) - x_{ip}(t) \|^2 }.
\end{align*}
If \(\E{W_{0}} < \infty\) and \(\mathscr{A}(E_{i,t})\ge 0\),
then \( (W_t)_{t \ge 0} \) is a supermartingale.
\end{lemma}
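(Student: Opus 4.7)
The plan is to first derive the pathwise bound by isolating the contribution of each interacting pair, then take conditional expectations to obtain the second inequality, and finally invoke the hypothesis $\mathscr{A}(E_{i,t}) \ge 0$ to secure the supermartingale property.

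For the first inequality I will observe that only the opinions $x_{ip}(t)$ and $x_{iq}(t)$ with $(p,q) \in M_t$ and $i \in L_{(p,q),t}$ are updated, and that the update rule \eqref{Eq:Model} makes $x_{ip}(t+1)$ the same for every $i \in L_{(p,q),t}$, namely the layer-averaged quantity
\[
y_p = \frac{1}{|L_{(p,q),t}|} \sum_{k \in L_{(p,q),t}} \bigl[(1-r_{k,(p,q)}(t))\,x_{kp}(t) + r_{k,(p,q)}(t)\,x_{kq}(t)\bigr],
\]
and analogously for $y_q$. Summing $\|x_{ip}(t+1)\|^2 + \|x_{iq}(t+1)\|^2$ over $i \in L_{(p,q),t}$ therefore gives exactly $|L_{(p,q),t}|\bigl(\|y_p\|^2 + \|y_q\|^2\bigr)$, at which point Jensen's inequality (Lemma~\ref{Lemma:Jensen's inequality}) applied to $\|\cdot\|^2$ (convex by Lemma~\ref{Lemma:convex function}) makes the outer factor $|L_{(p,q),t}|$ absorb the Jensen denominator $1/|L_{(p,q),t}|$, reducing the problem to a sum of single-pair terms. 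Each such term is handled by the elementary identity $\|(1-r)x + ry\|^2 + \|(1-r)y + rx\|^2 = \|x\|^2 + \|y\|^2 - 2r(1-r)\|x - y\|^2$, and since $r(1-r) = r^2(r^{-1} - 1)$, assembling the contributions over all $(p,q) \in M_t$ and $i \in L_{(p,q),t}$---noting that the norms of all other opinions are unchanged and cancel---produces the first pathwise inequality.

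For the second inequality I will condition on $M_t$ and use that the random draws $r_{i,(p,q)}(t)$ are independent of the opinion history (and of one another), so that each expectation of a product factors into $\E{r_{i,(p,q)}(t)(1-r_{i,(p,q)}(t))}$ times the expected squared difference. A short computation with the two-point law $r = \pm\mu$ with probabilities $\theta,\,1-\theta$ gives $\E{r(1-r)} = \E{r} - \E{r^2} = \mu(2\theta - 1) - \mu^2 = \mu(2\theta - 1 - \mu)$, which is exactly the coefficient in the statement.

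For the supermartingale claim, I will observe that for any $(p,q) \in M_t$ with $i \in L_{(p,q),t}$ the edge $(p,q)$ lies in $E_{i,t}$ (by the definition of $L_{(p,q),t}$) and in $\bigcup_{a \in S} a$ (since $M_t \subset U_t \in S$); the hypothesis $\mathscr{A}(E_{i,t}) \ge 0$ therefore forces every coefficient $\E{\mu_{i,(p,q)}(t)(2\theta_{i,(p,q)}(t) - 1 - \mu_{i,(p,q)}(t))}$ to be nonnegative, so that a further expectation over $M_t$ yields $\E{W_{t+1} \mid \mathcal{F}_t} \le W_t$ with respect to the natural filtration $\mathcal{F}_t$, and integrability $\E{W_t} < \infty$ follows inductively from $\E{W_0} < \infty$. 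The main obstacle I anticipate is the Jensen bookkeeping: one must align the outer sum over $i \in L_{(p,q),t}$ with the internal averaging so that the factor $|L_{(p,q),t}|$ exactly cancels $1/|L_{(p,q),t}|$, since otherwise the cross-layer coupling would insert a spurious factor and destroy the lower bound; once this step is carried out correctly, the rest reduces to routine algebra and independence arguments.
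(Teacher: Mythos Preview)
Your proposal is correct and follows essentially the same route as the paper: apply Jensen's inequality (Lemmas~\ref{Lemma:convex function} and~\ref{Lemma:Jensen's inequality}) to the layer average so that the factor $|L_{(p,q),t}|$ cancels, reduce to the single-pair identity $\|x\|^2+\|y\|^2-\|(1-r)x+ry\|^2-\|(1-r)y+rx\|^2=2r(1-r)\|x-y\|^2$, then factor the conditional expectation using independence of $r_{i,(p,q)}(t)$ from the opinions and compute $\E{r(1-r)}=\mu(2\theta-1-\mu)$. Your explicit mention of the inductive integrability $\E{W_t}<\infty$ and the observation that $(p,q)\in E_{i,t}\cap\bigcup_{a\in S}a$ are the same ingredients the paper uses to conclude the supermartingale property.
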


\begin{proof}
Let \( x_{ij} = x_{ij}(t) \), \( r_{i,(p,q)} = r_{i,(p,q)}(t) \), and 
\( a_{i,(p,q)} = r_{i,(p,q)} \big( x_{iq} - x_{ip} \big) \) for all 
\( i \in [m] \), \( j \in [n] \), and \( (p,q) \in [n]^2 \).
It is clear that \( W_t - W_{t+1} = 0 \) if \( M_t = \emptyset \).
For \( M_t \neq \emptyset \),
\begin{align}
W_t - W_{t+1}
&= \sum_{(p,q) \in M_t} 
\bigg[
\sum_{i \in L_{(p,q),t}} \big( \| x_{ip}  \|^2 + \| x_{iq}  \|^2 \big) \notag \\
&\hspace{2cm}
- |L_{(p,q),t}| \left\|
\frac{
1
}{|L_{(p,q),t}|}\sum_{i \in L_{(p,q),t}} (x_{ip} + a_{i,(p,q)})
\right\|^2 \notag \\
&\hspace{2cm}
- |L_{(p,q),t}| \left\|
\frac{
1
}{|L_{(p,q),t}|}\sum_{i \in L_{(p,q),t}} (x_{iq} - a_{i,(p,q)})
\right\|^2
\bigg]. \label{Eq:difference}
\end{align}
By Lemmas~\ref{Lemma:convex function} and~\ref{Lemma:Jensen's inequality},
\begin{align*}
&|L_{(p,q),t}|
\left\|
\frac{
1
}{|L_{(p,q),t}|}\sum_{i \in L_{(p,q),t}} (x_{ip} + a_{i,(p,q)})
\right\|^2
\le 
\sum_{i \in L_{(p,q),t}} \| x_{ip} + a_{i,(p,q)}  \|^2,
\end{align*}
and similarly,
\begin{align*}
|L_{(p,q),t}|
\left\|
\frac{
1
}{|L_{(p,q),t}|}\sum_{i \in L_{(p,q),t}} (x_{iq} - a_{i,(p,q)})
\right\|^2
\le 
\sum_{i \in L_{(p,q),t}} \| x_{iq} - a_{i,(p,q)}  \|^2.
\end{align*}
Therefore, from \eqref{Eq:difference},
\begin{align*}
W_t - W_{t+1}
&\ge 
\sum_{(p,q) \in M_t}
\sum_{i \in L_{(p,q),t}}
\big[
\| x_{ip}  \|^2 - \| x_{ip}  + a_{i,(p,q)} \|^2 \\
&\hspace{3.5cm}
+ \| x_{iq}  \|^2 - \| x_{iq}  - a_{i,(p,q)} \|^2
\big] \\
&= 
\sum_{(p,q) \in M_t}
\sum_{i \in L_{(p,q),t}}
\big(
-2 \| a_{i,(p,q)} \|^2
- 2 \langle x_{ip} , a_{i,(p,q)} \rangle \\
&\hspace{3.5cm}
+ 2 \langle x_{iq} , a_{i,(p,q)} \rangle
\big) \\
&= 
\sum_{(p,q) \in M_t}
\sum_{i \in L_{(p,q),t}}
\big(
-2 \| a_{i,(p,q)} \|^2
+ 2 \langle x_{iq} - x_{ip}, a_{i,(p,q)} \rangle
\big)\\
&=
2\sum_{(p,q) \in M_t}
\sum_{i \in L_{(p,q),t}}(r_{i,(p,q)}^{-1}-1)\| a_{i,(p,q)} \|^2.
\end{align*}
Taking expectations, we obtain
\begin{align}
\mathbb{E}\!\left[ W_t - W_{t+1} \mid M_t \right]
&\ge
2 \sum_{(p,q) \in M_t}
\sum_{i \in L_{(p,q),t}}
\mathbb{E}\big[ r_{i,(p,q)}^2 \big( r_{i,(p,q)}^{-1} - 1 \big) \big]
\mathbb{E}\big[ \| x_{iq}-x_{ip} \|^2 \big]
 \label{Eq:Independence} \\
&\hspace{-1cm}=
2 \sum_{(p,q) \in M_t}
\sum_{i \in L_{(p,q),t}}
\mathbb{E}\Big[
\mu_{i,(p,q)}
\big( 2\theta_{i,(p,q)} - 1 - \mu_{i,(p,q)} \big)
\Big]
\mathbb{E}\big[ \| x_{iq} - x_{ip} \|^2 \big]. \notag
\end{align}
Here,~\eqref{Eq:Independence} holds due to the independence of $r_{i,(p,q)}$ and $x_{iq}, x_{ip}$.  
Moreover, from the assumption and the fact that
\[
M_t \subset \bigcup_{i \in [m]} \left( E_{i,t} \cap \bigcup_{a \in S} a \right),
\]
we have
\[
\mathbb{E}\big[ W_t - W_{t+1} \mid M_t \big] \ge 0
\quad \text{for all } t \ge 0.
\]
This completes the proof.
\end{proof}

Since \( (W_t)_{t \ge 0} \) is a nonnegative supermartingale when $\mathscr{A}(E_{i,t})\ge 0$,
the martingale convergence theorem implies that \( W_t \) converges almost surely to some random variable \( W \) with finite expectation, as stated in Lemma~\ref{Lemma:W_t converges}. 

\begin{lemma}\label{Lemma:W_t converges}
    \( W_t \) converges almost surely to some random variable \( W \) with finite expectation as \( t \to \infty \) whenever $\E{W_{0}}<\infty$ and $\mathscr{A}(E_{i,t})\ge 0$.
\end{lemma}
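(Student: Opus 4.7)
\medskip

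The plan is to combine Lemma~\ref{Lemma:supermartingale} with Doob's martingale convergence theorem for nonnegative supermartingales, so the argument is essentially a direct application of standard theory rather than a substantive new computation.

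First, I would fix the natural filtration $(\mathcal{F}_t)_{t \ge 0}$ generated by the initial opinions together with the history of matchings, interaction signs, and interaction rates up to time $t$, so that $W_t$ is $\mathcal{F}_t$-measurable by construction of the update rule in~\eqref{Eq:Model}. The nonnegativity $W_t \ge 0$ is immediate from the definition $W_t = \sum_{i,j} \|x_{ij}(t)\|^2$. Under the assumption $\mathscr{A}(E_{i,t}) \ge 0$, Lemma~\ref{Lemma:supermartingale} already tells us that the conditional expectation $\mathbb{E}[W_t - W_{t+1} \mid M_t]$ is nonnegative; conditioning further on $\mathcal{F}_t$ and using the tower property yields $\mathbb{E}[W_{t+1} \mid \mathcal{F}_t] \le W_t$ almost surely, which is exactly the supermartingale property.

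Next, I would check integrability. Iterating the supermartingale inequality gives $\mathbb{E}[W_t] \le \mathbb{E}[W_0] < \infty$ for every $t \ge 0$, so $(W_t)$ is $L^1$-bounded. Since $W_t \ge 0$, Doob's convergence theorem applies and yields the existence of an $\mathcal{F}_\infty$-measurable limit $W$ such that $W_t \to W$ almost surely as $t \to \infty$. Finally, Fatou's lemma gives $\mathbb{E}[W] = \mathbb{E}[\liminf_{t \to \infty} W_t] \le \liminf_{t \to \infty} \mathbb{E}[W_t] \le \mathbb{E}[W_0] < \infty$, so $W$ is integrable.

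There is no serious obstacle here: the only step requiring any care is verifying that the conditional inequality from Lemma~\ref{Lemma:supermartingale}, which is stated relative to $M_t$, actually upgrades to a supermartingale inequality relative to the full filtration $\mathcal{F}_t$. This is handled by the tower property once one observes that the relevant update increments depend on $\mathcal{F}_t$ only through $M_t$ and through fresh, $\mathcal{F}_t$-independent randomness in the signs $r_{i,(p,q)}(t)$ and the matching $U_t$.
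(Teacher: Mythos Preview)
Your proposal is correct and follows essentially the same approach as the paper: the paper simply observes that Lemma~\ref{Lemma:supermartingale} makes \((W_t)_{t\ge0}\) a nonnegative supermartingale and then invokes the martingale convergence theorem, which is exactly what you do with the additional detail of spelling out the filtration, the integrability check, and the Fatou step for \(\mathbb{E}[W]<\infty\). Your care about upgrading the \(M_t\)-conditional inequality to the full filtration via the tower property is a reasonable clarification, though the paper already records the supermartingale conclusion as part of Lemma~\ref{Lemma:supermartingale} itself.
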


Since a spanning forest is itself a graph, \( E(\mathcal{F}(G_{i,t})) \) denotes the edge set of the spanning forest of the graph \( G_{i,t} \).

\begin{lemma}\label{Lemma:DeltaTriviality}
Assume that \( \mathbb{E}[W_{0}] < \infty \),
\[
\bigcup_{a \in S} a \supset \bigcup_{i \in [m],\, t \ge 0} E\bigl(\mathcal{F}(G_{i,t})\bigr), 
\qquad 
\mathscr{A}\!\left(E\bigl(\mathcal{F}(G_{i,t})\bigr)\right) > 0,
\qquad 
\text{and} \qquad 
\mathscr{A}(E_{i,t}) \ge 0.
\]
Then, for all \( \epsilon > 0 \), all components of \( G_{k,t} \), where \( k \in [m] \), are \( \epsilon \)-trivial almost surely after some time step.
\end{lemma}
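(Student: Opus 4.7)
The idea is to convert the supermartingale decrement bound from Lemma~\ref{Lemma:supermartingale} into a summable estimate on squared opinion differences along current spanning-forest edges, and then pass from edges to components via a path argument. The key ingredients are: (i) Lemma~\ref{Lemma:W_t converges}, which guarantees $\sum_{t\ge 0}\mathbb{E}[W_t - W_{t+1}] \le \mathbb{E}[W_0] < \infty$; (ii) the assumption $\mathscr{A}(E(\mathcal{F}(G_{i,t}))) \ge \alpha > 0$, which turns the inner bound on $\mathbb{E}[\mu(2\theta - 1 - \mu)]$ into a uniform positive constant whenever the edge under consideration lies in a spanning forest; and (iii) the admissibility hypothesis $\bigcup_{a \in S} a \supset \bigcup_{i,t} E(\mathcal{F}(G_{i,t}))$, which ensures a uniform positive probability $P_* := \min_{(p,q)\,\text{admissible}}\mathbb{P}((p,q) \in U_t) > 0$ that each spanning-forest edge is actually sampled by $U_t$, since the $U_t$ are i.i.d.\ and independent of the past.

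\textbf{Step 1 (summability).}
Taking total expectations in the lower bound of Lemma~\ref{Lemma:supermartingale}, restricting the inner sum to pairs $(p,q) \in E(\mathcal{F}(G_{k,t}))$ (so that $k \in L_{(p,q),t}$ is automatic and the factor $\mathbb{E}[\mu(2\theta - 1 - \mu)] \ge \alpha$), and using the independence of $U_t$ from $\mathcal{F}_t$ to replace $\mathbf{1}\{(p,q) \in M_t\}$ by $P_{(p,q)} \ge P_*$ whenever $(p,q) \in E_{k,t}$, one obtains the telescoping bound
\[
\sum_{t\ge 0}\sum_{k \in [m]}\sum_{(p,q) \in E(\mathcal{F}(G_{k,t}))}
\mathbb{E}\!\left[\|x_{kq}(t) - x_{kp}(t)\|^2\right]
\;\le\; \frac{\mathbb{E}[W_0]}{2\alpha P_*} < \infty.
\]
Since there are only finitely many triples $(k,p,q) \in [m]\times[n]^2$, Markov's inequality together with the Borel--Cantelli lemma yields, for every $\eta > 0$, an almost surely finite random time $T_\eta$ such that for all $t \ge T_\eta$ and every triple with $(p,q) \in E(\mathcal{F}(G_{k,t}))$, one has $\|x_{kq}(t) - x_{kp}(t)\| \le \eta$.

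\textbf{Step 2 (edges to components) and main obstacle.}
Given $\epsilon > 0$, set $\eta = \epsilon/(n-1)$. For any $t \ge T_\eta$, any layer $k$, and any two vertices $a, b$ in the same component of $G_{k,t}$, the spanning forest $\mathcal{F}(G_{k,t})$ contains an $a$--$b$ path of length at most $n-1$; the triangle inequality applied along this path gives $\|x_{ka}(t) - x_{kb}(t)\| \le (n-1)\eta = \epsilon$, establishing the claimed $\epsilon$-triviality. The principal subtlety lies in Step~1: the lower bound of Lemma~\ref{Lemma:supermartingale} is indexed by the random matching $M_t$, whereas the target conclusion must be phrased in terms of spanning-forest edges whose identity itself varies with $t$. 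Bridging the two requires combining admissibility (so that $P_* > 0$) with the independence of $U_t$ from both the past and the noise $r_{i,(p,q)}(t)$, so that the contribution of each spanning-forest edge can be isolated with a uniform positive weight; once this reduction is made, the remainder is a routine Borel--Cantelli plus triangle-inequality argument.
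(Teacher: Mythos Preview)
Your argument is correct and reaches the same conclusion, but it proceeds along a different route from the paper. The paper argues by contradiction: assuming some component remains $\epsilon$-nontrivial on a positive-probability event, it uses pigeonhole plus the second Borel--Cantelli lemma to extract a fixed admissible spanning-forest edge $(p,q)$ that is simultaneously in $M_t$ and has difference exceeding $\delta$ infinitely often, and then contradicts $\mathbb{E}[W_t-W_{t+1}\mid M_t]\to 0$ (which the paper obtains from Lemma~\ref{Lemma:W_t converges} and dominated convergence). You instead go direct: you convert the conditional decrement bound into an unconditional telescoping estimate by introducing the uniform sampling probability $P_*>0$, obtain summability of $\sum_t \mathbb{E}\bigl[\sum_{k,(p,q)}\|x_{kq}(t)-x_{kp}(t)\|^2\bigr]$ over current spanning-forest edges, and apply the first Borel--Cantelli lemma (in fact Tonelli alone suffices to get $Y_t\to 0$ a.s.). Both proofs share the same endgame---a path argument along the spanning forest with the triangle inequality. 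Your approach has the advantage of sidestepping two subtle points the paper glosses over: the conditional Borel--Cantelli step needed to couple ``large edge difference'' with ``$(p,q)\in M_t$'', and the justification of dominated convergence for $\mathbb{E}[W_t-W_{t+1}\mid M_t]$. The paper's approach, on the other hand, never needs to name or bound $P_*$ explicitly. One minor remark: the phrase ``since there are only finitely many triples'' is not really what makes Step~1 work; what you actually use is that the aggregate quantity $Y_t$ has summable expectation, hence tends to zero almost surely, which forces every individual spanning-forest-edge term below $\eta$ eventually.
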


\begin{proof}
Let \( (\Omega, \mathscr{F}, \Prob) \) be a probability space. Suppose, for contradiction, that the statement is false. Then there exist \( k \in [m] \) and \( \epsilon > 0 \) such that some component of \( G_{k,t} \) is \( \epsilon \)-nontrivial infinitely often on an event \( F \in \mathscr{F} \) with \( \Prob(F) > 0 \). Since all social graphs are finite and
\[
\bigcup_{a \in S} a \supset \bigcup_{i \in [m],\, t \ge 0} E\big(\mathcal{F}(G_{i,t})\big),
\]
it follows from the triangle inequality and the second Borel--Cantelli lemma that there exist
\( \delta = \epsilon/n > 0 \) and a social edge \( (p,q) \in M_t \bigcap E\bigl(\mathcal{F}(G_{k,t})\bigr)\) such that
\( \|x_{k p}(t) - x_{k q}(t)\| > \delta \)
infinitely often on the event \( F \).

By Lemma~\ref{Lemma:supermartingale}, we have
\begin{align*}
\mathbb{E}\bigl[\,W_t - W_{t+1} \mid M_t\,\bigr]
\;\ge\;
2\,\mathscr{A}\!\left(E\bigl(\mathcal{F}(G_{k,t})\bigr)\right)\,
\delta^{2}\,\Prob(F).
\end{align*}
infinitely often. On the other hand, Lemma~\ref{Lemma:W_t converges} and the dominated convergence theorem imply that
\[
\mathbb{E}[W_t - W_{t+1} \mid M_t] \to 0
\qquad \text{as } t \to \infty,
\]
which contradicts the inequality above.
\end{proof}

\begin{lemma}\label{Lemma:Delta-trivialityUnderAttractionOnly}
Assume that
\[
\bigcup_{a \in S} a \supset \bigcup_{i \in [m],\, t \ge 0}
E\big(\mathcal{F}(G_{i,t})\big),\quad
\mu\Big(E\big(\mathcal{F}(G_{i,t})\big)\Big) > 0,\quad
\text{and}\quad \theta(E_{i,t}) = 1.
\]
Then, for all \( \epsilon > 0 \), all components of \( G_{k,t} \), where \( k \in [m] \), are \( \epsilon \)-trivial almost surely after some time step.
\end{lemma}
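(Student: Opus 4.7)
The plan is to exploit the fact that, in the purely attractive regime, the bound from Lemma~\ref{Lemma:supermartingale} holds \emph{pathwise} rather than only in conditional expectation, so the moment condition $\E{W_0}<\infty$ can be dropped entirely.

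Indeed, when $\theta(E_{i,t})=1$ we have $r_{i,(p,q)}(t)=\mu_{i,(p,q)}(t)\in(0,1/2]$ almost surely, so the derivation in the proof of Lemma~\ref{Lemma:supermartingale} specializes to
\[
W_t - W_{t+1}\;\ge\;2\sum_{(p,q)\in M_t}\sum_{i\in L_{(p,q),t}}\mu_{i,(p,q)}(t)\bigl(1-\mu_{i,(p,q)}(t)\bigr)\,\|x_{iq}(t)-x_{ip}(t)\|^{2}\;\ge\;0
\]
pointwise on every sample path. Since each $x_{ij}(0)\in\mathbb{R}^d$ is finite, $W_0<\infty$ almost surely, and therefore $(W_t)_{t\ge0}$ is a nonnegative, pathwise nonincreasing sequence converging almost surely to a finite limit $W_\infty$. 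In particular, the telescoping series $\sum_{t\ge0}(W_t-W_{t+1})=W_0-W_\infty$ is almost surely finite.

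I would then argue by contradiction. If the conclusion fails, a union bound over the finitely many layers $k\in[m]$ and the countably many values $\epsilon=1/\ell$ yields fixed $k$ and $\epsilon$ such that some component of $G_{k,t}$ is $\epsilon$-nontrivial infinitely often on an event $F$ with $\Prob(F)>0$. Along a spanning-forest path of length at most $n-1$ joining two far-apart vertices of that component, the triangle inequality forces some edge $(p,q)\in E(\mathcal{F}(G_{k,t}))$ to satisfy $\|x_{kp}(t)-x_{kq}(t)\|>\delta:=\epsilon/n$. Because there are only finitely many candidate pairs in $[n]^2$, a pigeonhole decomposition of $F$ then produces a single admissible pair $(p_0,q_0)$ and a subevent $F'\subset F$ with $\Prob(F')>0$ on which $(p_0,q_0)\in E(\mathcal{F}(G_{k,t}))$ and $\|x_{kp_0}(t)-x_{kq_0}(t)\|>\delta$ both hold for infinitely many $t$; note that $k\in L_{(p_0,q_0),t}$ at all such times.

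The final and most delicate step couples this geometric recurrence with the matching mechanism. Let $\mathscr{F}_t$ denote the $\sigma$-algebra generated by the initial states and by $U_0,\dots,U_{t-1}$, let $A_t:=\{(p_0,q_0)\in U_t\}$, and let $B_t\in\mathscr{F}_t$ encode the displacement-and-layer event just described. Since $(p_0,q_0)$ is admissible and $\{U_t\}$ is i.i.d., $\Prob(A_t\mid\mathscr{F}_t)=q:=\Prob((p_0,q_0)\in U_0)>0$ is a positive constant, so L\'evy's conditional Borel--Cantelli lemma applied to the $\mathscr{F}_{t+1}$-adapted events $A_t\cap B_t$ yields
\[
\{A_t\cap B_t\ \text{i.o.}\}=\Bigl\{\sum_t q\,\mathbbm{1}_{B_t}=\infty\Bigr\}=\{B_t\ \text{i.o.}\}\supset F'\quad\text{a.s.}
\]
At each such time $t$ one has $(p_0,q_0)\in M_t$ and $k\in L_{(p_0,q_0),t}$, so the pathwise bound contributes at least $\mu^{*}\delta^{2}>0$ to $W_t-W_{t+1}$, where $\mu^{*}:=\mu\bigl(E(\mathcal{F}(G_{i,t}))\bigr)\in(0,1/2]$. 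Summing over these infinitely many times would force $\sum_t(W_t-W_{t+1})=\infty$ on a set of positive probability, contradicting its almost sure finiteness. The main obstacle is precisely this coupling step: one must extract simultaneous recurrence of the geometric event $B_t$ and the independent matching event $A_t$ despite their dependence through the shared past.
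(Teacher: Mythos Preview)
Your proof is correct and follows essentially the same route as the paper: show that $(W_t)$ is pathwise nonincreasing under pure attraction, argue by contradiction that a persistent $\delta$-gap along some admissible spanning-forest edge forces $W_t-W_{t+1}$ to be bounded below by a positive constant infinitely often, and obtain a contradiction with the almost sure convergence of $W_t$. Your handling of the coupling between the geometric recurrence and the matching event---via pigeonholing to a fixed admissible pair and L\'evy's conditional Borel--Cantelli lemma---is more explicit than the paper's, which compresses this step into a single appeal to the second Borel--Cantelli lemma; otherwise the arguments coincide.
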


\begin{proof}
Let \( (\Omega, \mathscr{F}, \Prob) \) be a probability space. Suppose, for contradiction, that the statement is false. Then there exist \( k \in [m] \) and \( \epsilon > 0 \) such that some component of \( G_{k,t} \) is \( \epsilon \)-nontrivial infinitely often on an event \( F \in \mathscr{F} \) with \( \Prob(F) > 0 \). Since all social graphs are finite and
\[
\bigcup_{a \in S} a \supset \bigcup_{i \in [m],\, t \ge 0}
E\big(\mathcal{F}(G_{i,t})\big),
\]
it follows from the triangle inequality and the second Borel--Cantelli lemma that there exist
\( \delta = \epsilon/n > 0 \) and a social edge \( (p,q) \in M_t\bigcap E\bigl(\mathcal{F}(G_{k,t})\bigr) \) such that
\( \|x_{kp}(t) - x_{kq}(t)\| > \delta \)
infinitely often on the event $F$.

By Lemma~\ref{Lemma:supermartingale}, we have
\[
W_t - W_{t+1} \;\ge\; 2 \left(\mu\Big(E\big(\mathcal{F}(G_{i,t})\big)\Big) \right)^2 \, \delta^2
\]
infinitely often on the event $F$. On the other hand, \( (W_t)_{t \ge 0} \) is nonnegative and nonincreasing when
\(\theta(E_{i,t}) = 1.\) Thus \( W_t \) converges almost surely to some random variable \( W \) as \( t \to \infty \). Consequently,
\[
W_t - W_{t+1} \to 0
\qquad \text{as } t \to \infty,
\]
contradicting the inequality above.
\end{proof}

\begin{lemma}\label{Lemma: delta-trivial of all opinions}
Let \( \theta(E_{i,t}) = 1 \).  
For any \( \delta > 0 \), if
\[
\max_{i_1, i_2 \in [m],\, j_1, j_2 \in [n]} \norm{x_{i_1 j_1}(t) - x_{i_2 j_2}(t)} \leq \delta,
\]
then
\[
\max_{i_1, i_2 \in [m],\, j_1, j_2 \in [n]} \norm{x_{i_1 j_1}(t+1) - x_{i_2 j_2}(t+1)} \leq \delta
\]
for all \( t \ge 0 \).
\end{lemma}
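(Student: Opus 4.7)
The plan is a direct one-step convex-combination argument: under pure attraction every time-$(t+1)$ opinion is a convex combination of time-$t$ opinions, and the convex hull of a set of diameter at most $\delta$ itself has diameter at most $\delta$, so the pairwise bound simply transfers.

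First, I would unpack \eqref{Eq:Model} under $\theta(E_{i,t})=1$. In this regime every realization $r_{k,(p,q)}(t)$ equals $\mu_{k,(p,q)}(t)\in(0,1/2]$, so each single-layer attractive update rewrites as
\[
x_{kp}(t)+\mu_{k,(p,q)}(t)\bigl(x_{kq}(t)-x_{kp}(t)\bigr)=\bigl(1-\mu_{k,(p,q)}(t)\bigr)x_{kp}(t)+\mu_{k,(p,q)}(t)x_{kq}(t),
\]
which is a convex combination of two opinions at time $t$. The outer averaging over $k\in L_{(p,q),t}$ dictated by \eqref{Eq:Model} then identifies $x_{ip}(t+1)$, for $(p,q)\in M_t$ and $i\in L_{(p,q),t}$, as a convex combination of the finite family $\{x_{kp}(t),x_{kq}(t):k\in L_{(p,q),t}\}$. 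In every remaining case --- $p$ not covered by $M_t$, or $p$ matched with $q$ but $i\notin L_{(p,q),t}$ --- the coordinate is unchanged, hence trivially its own convex combination of time-$t$ opinions.

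Second, I would invoke the standard normed-space fact that if $\{a_r\}$ is a finite set with $\|a_r-a_s\|\le\delta$ for all $r,s$, and $u=\sum_r\alpha_r a_r$, $v=\sum_s\beta_s a_s$ are convex combinations, then
\[
\|u-v\|=\Bigl\|\sum_{r,s}\alpha_r\beta_s(a_r-a_s)\Bigr\|\le\sum_{r,s}\alpha_r\beta_s\|a_r-a_s\|\le\delta.
\]
Applied with $u=x_{i_1j_1}(t+1)$ and $v=x_{i_2j_2}(t+1)$, both expressed as convex combinations of time-$t$ opinions via the step above, this delivers $\|x_{i_1j_1}(t+1)-x_{i_2j_2}(t+1)\|\le\delta$ for all choices of indices, which is exactly the conclusion.

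No step presents a genuine technical obstacle; the only bookkeeping to be careful about is the case split that exhibits each $x_{ij}(t+1)$ as an explicit convex combination of $\{x_{kp}(t):k\in[m],\,p\in[n]\}$. Nonnegativity of all weights and their summing to one --- which is what the diameter estimate requires --- is ensured by $\mu_{k,(p,q)}(t)\in(0,1/2]$ (so both $\mu_{k,(p,q)}(t)$ and $1-\mu_{k,(p,q)}(t)$ are in $[0,1]$) together with the uniform averaging $1/|L_{(p,q),t}|$ across the relevant layers.
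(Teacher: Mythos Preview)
Your proposal is correct and follows essentially the same approach as the paper: the paper's proof simply observes that under $\theta(E_{i,t})=1$ each $x_{ij}(t+1)$ lies in the convex hull of $\{x_{pq}(t):p\in[m],\,q\in[n]\}$ and concludes the diameter bound in one line. Your write-up is more explicit about the case split and the convex-combination diameter estimate, but the argument is the same.
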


\begin{proof} 
Under the assumption, for all \( t \ge 0 \), \( i \in [m] \), and \( j \in [n] \), each \( x_{ij}(t+1) \) lies in the convex hull of the set \( \{ x_{pq}(t) : p \in [m],\, q \in [n] \} \).  
Therefore,
\begin{align*}
&\max_{i_1,i_2 \in [m],\, j_1,j_2 \in [n]} \norm{x_{i_1 j_1}(t+1) - x_{i_2 j_2}(t+1)} \\
&\hspace{3cm}\le \max_{i_1,i_2 \in [m],\, j_1,j_2 \in [n]} \norm{x_{i_1 j_1}(t) - x_{i_2 j_2}(t)} \le \delta.
\end{align*}
\end{proof}

\begin{proof}[\bf Proof of Theorem~\ref{Thm:consensus}]
We claim that
\[
\lim_{t \to \infty} \max_{i_1, i_2 \in [m],\, j_1, j_2 \in [n]} \norm{x_{i_1 j_1}(t) - x_{i_2 j_2}(t)} = 0.
\]
By Lemma~\ref{Lemma:DeltaTriviality}, for every \(\delta > 0\), there exists a time \(s \ge 0\) such that all components in all social graphs are \(\delta / (n + 2)\)-trivial for all \(t \ge s\). From the assumptions, there is a time \(s_1 \ge s\) such that, for all \(t \ge s_1\), all social graphs are connected, with \(M_t\) forming a connected collection of all layers.

Let \((p_1, q_1)(p_2, q_2) \dots (p_{r-1}, q_{r-1})(p_r, q_r)\) be the shortest path in the graph \((M_t, F_t)\) such that \(i_1 \in L_{(p_1, q_1), t}\) and \(i_2 \in L_{(p_r, q_r), t}\) for all \(t \ge s_1\) and \(i_1, i_2 \in [m]\). Let \(k_i \in L_{(p_i, q_i), t} \cap L_{(p_{i+1}, q_{i+1}), t}\) for all \(i \in [r-1]\) and \(t \ge s_1\). Then \(r \le n + 1\).

Define \(x_{ij}^\star = x_{ij}(t+1)\) for all \(i \in [m]\) and \(j \in [n]\). By the triangle inequality,
\begin{align*}
    \norm{x_{i_1 j_1}^\star - x_{i_2 j_2}^\star} &\le 
    \norm{x_{i_1 j_1}^\star - x_{k_1 p_1}^\star} + \sum_{i \in [r-2]} \norm{x_{k_i p_i}^\star - x_{k_{i+1} p_{i+1}}^\star} \\
    &\hspace{0.5cm} + \norm{x_{k_{r-1} p_{r-1}}^\star - x_{i_2 p_r}^\star} + \norm{x_{i_2 p_r}^\star - x_{i_2 j_2}^\star} \\
    &\le (r + 1) \delta / (n + 2) \le \delta
\end{align*}
for all \(t \ge s_1\), \(i_1, i_2 \in [m]\), and \(j_1, j_2 \in [n]\), where \(x_{k_{i+1} p_{i+1}}^\star = x_{k_i p_{i+1}}^\star\) for all \(i \in [r-2]\), \(x_{k_1 p_1}^\star = x_{i_1 p_1}^\star\), and \(x_{i_2 p_r}^\star = x_{k_{r-1} p_r}^\star\). This proves the claim.

Observe that the total opinion of all individuals across all layers is conserved over time, and
\[
\frac{1}{nm} \sum_{i \in [m],\, j \in [n]} x_{ij}(t) \quad \text{belongs to the convex hull of} \quad \{ x_{ij}(t) \}_{i \in [m],\, j \in [n]}.
\]
Therefore,
\begin{align*}
   & \limsup_{t \to \infty} \norm{x_{ij}(t) - \frac{1}{nm} \sum_{p \in [m],\, q \in [n]} x_{pq}(0)} = \limsup_{t \to \infty} \norm{x_{ij}(t) - \frac{1}{nm} \sum_{p \in [m],\, q \in [n]} x_{pq}(t)} \\
   &\quad \le \limsup_{t \to \infty} \max_{i_1, i_2 \in [m],\, j_1, j_2 \in [n]} \norm{x_{i_1 j_1}(t) - x_{i_2 j_2}(t)} = 0
\end{align*}
for all \(i \in [m]\) and \(j \in [n]\). Therefore, \(x_{ij}(t)\) converges to
\[
\frac{1}{nm} \sum_{p \in [m],\, q \in [n]} x_{pq}(0) \quad \text{as} \quad t \to \infty
\]
for all \(i \in [m]\) and \(j \in [n]\).
\end{proof}

\begin{proof}[\bf Proof of Theorem~\ref{Thm:consensus under attraction only}]
    By Lemma~\ref{Lemma:Delta-trivialityUnderAttractionOnly}, for all \(\delta > 0\), there exists a time \(s \ge 0\) such that all components in all social graphs are \(\delta / (n + 2)\)-trivial for all \(t \ge s\). By the assumptions, all social graphs are connected at some time \(s_1 + 1 > s\), with \(M_{s_1}\) forming a connected collection of all layers.

    Let \((p_1, q_1)(p_2, q_2) \dots (p_{r-1}, q_{r-1})(p_r, q_r)\) be the shortest path in the graph \((M_{s_1}, F_{s_1})\) such that \(i_1 \in L_{(p_1, q_1), s_1}\) and \(i_2 \in L_{(p_r, q_r), s_1}\) for all \(i_1, i_2 \in [m]\). Let \(k_i \in L_{(p_i, q_i), s_1} \cap L_{(p_{i+1}, q_{i+1}), s_1}\) for all \(i \in [r-1]\). Then \(r \le n + 1\).

    Define \(x_{ij}^\star = x_{ij}(s_1 + 1)\) for all \(i \in [m]\) and \(j \in [n]\). By the triangle inequality,
    \begin{align*}
        \norm{x_{i_1 j_1}^\star - x_{i_2 j_2}^\star} &\le 
        \norm{x_{i_1 j_1}^\star - x_{k_1 p_1}^\star} + \sum_{i \in [r-2]} \norm{x_{k_i p_i}^\star - x_{k_{i+1} p_{i+1}}^\star} \\
        &\hspace{0.5cm} + \norm{x_{k_{r-1} p_{r-1}}^\star - x_{i_2 p_r}^\star} + \norm{x_{i_2 p_r}^\star - x_{i_2 j_2}^\star} \\
        &\le (r + 1) \delta / (n + 2) \le \delta
    \end{align*}
    for all \(i_1, i_2 \in [m]\) and \(j_1, j_2 \in [n]\), where \(x_{k_{i+1} p_{i+1}}^\star = x_{k_i p_{i+1}}^\star\) for all \(i \in [r-2]\), \(x_{k_1 p_1}^\star = x_{i_1 p_1}^\star\), and \(x_{i_2 p_r}^\star = x_{k_{r-1} p_r}^\star\). Together with Lemma~\ref{Lemma: delta-trivial of all opinions}, this yields
    \[
    \lim_{t \to \infty} \max_{i_1, i_2 \in [m],\, j_1, j_2 \in [n]} \norm{x_{i_1 j_1}(t) - x_{i_2 j_2}(t)} = 0.
    \]

    Observe that the total opinion of all individuals across all layers is conserved over time, and
    \[
    \frac{1}{nm} \sum_{i \in [m],\, j \in [n]} x_{ij}(t) \quad \text{belongs to the convex hull of} \quad \{x_{ij}(t)\}_{i \in [m],\, j \in [n]}.
    \]
    Therefore,
    \begin{align*}
       & \limsup_{t \to \infty} \norm{x_{ij}(t) - \frac{1}{nm} \sum_{p \in [m],\, q \in [n]} x_{pq}(0)} = \limsup_{t \to \infty} \norm{x_{ij}(t) - \frac{1}{nm} \sum_{p \in [m],\, q \in [n]} x_{pq}(t)} \\
       &\quad \le \limsup_{t \to \infty} \max_{i_1, i_2 \in [m],\, j_1, j_2 \in [n]} \norm{x_{i_1 j_1}(t) - x_{i_2 j_2}(t)} = 0
    \end{align*}
    for all \(i \in [m]\) and \(j \in [n]\). Therefore, \(x_{ij}(t)\) converges to
    \[
    \frac{1}{nm} \sum_{p \in [m],\, q \in [n]} x_{pq}(0) \quad \text{as} \quad t \to \infty
    \]
    for all \(i \in [m]\) and \(j \in [n]\).
\end{proof}

\section{Simulations}

We illustrate the theoretical results established in
Theorems~\ref{Thm:consensus} and~\ref{Thm:consensus under attraction only}
through numerical simulations. All simulations were implemented in
\textsf{R}, with random seeds fixed to ensure reproducibility.

\subsection{Common simulation setup}

We consider a system of \( n = 100 \) agents interacting over \( m = 5 \)
distinct layers, simulated over a time horizon of \( T = 1000 \).
At each time step \( t \), a social graph \( G_{k,t} \) is independently
generated for each layer \( k \in [m] \) as an Erd\H{o}s--R\'enyi graph with
edge probability \( p = 0.05 \).
To guarantee connectivity in every layer, we force the inclusion of the
chain
\[
1\text{--}2\text{--}\cdots\text{--}n,
\]
so that each layer contains a spanning path.

At each time step, a matching \(U_t\) is sampled, and the effective interaction set
\(M_t\) is obtained by retaining only those matched edges that appear in at least one layer. For each interacting pair, opinions are updated according to the multilayer
attraction--repulsion dynamics defined in~\eqref{Eq:Model}, with
layer-dependent interaction rates.

For visualization, we plot the trajectories of all opinions
\( x_{ki}(t) \) over a finite time window and superimpose the global average
\[
\frac{1}{nm}\sum_{k=1}^{m}\sum_{i=1}^{n} x_{ki}(0),
\]
which is conserved by the dynamics.

\subsection{Simulation of Theorem~\ref{Thm:consensus}}

To reflect the assumptions of Theorem~\ref{Thm:consensus}, the support of the
matching distribution is restricted so that each matching contains at least
one chain edge \( (i,i+1) \). This ensures that \(M_t\) forms a connected collection of all layers at every time step.

Interaction parameters are sampled as follows.
For chain edges \( (i,i+1) \),
\[
\mu \sim \mathrm{Unif}(0.1, 0.5),
\qquad
\theta \sim \mathrm{Unif}\!\left( \frac{1.1 + \mu}{2},\, 1 \right),
\]
while for all other edges,
\[
\mu \sim \mathrm{Unif}(0, 0.5),
\qquad
\theta \sim \mathrm{Unif}\!\left( \frac{1 + \mu}{2},\, 1 \right).
\]
Initial opinions are drawn independently from the uniform distribution on
\( (0,1) \).

Figure~\ref{fig:thm1} displays representative opinion trajectories up to
time \( t = 800 \).
Despite the presence of repulsive interactions, all opinions converge almost
surely to the global average, in agreement with the conclusion of
Theorem~\ref{Thm:consensus}.

\subsection{Simulation of Theorem~\ref{Thm:consensus under attraction only}}

For Theorem~\ref{Thm:consensus under attraction only}, we consider a purely
attractive regime by eliminating repulsive interactions and allowing
matchings that do not necessarily contain chain edges.
Matchings are sampled uniformly at random from the complete graph.

Interaction strengths remain edge-dependent:
for chain edges \( (i,i+1) \),
\[
\mu \sim \mathrm{Unif}(0.1, 0.5),
\]
and for non-chain edges,
\[
\mu \sim \mathrm{Unif}(0, 0.5).
\]
Initial opinions are sampled independently from a Cauchy distribution,
highlighting that convergence does not rely on finite second moments.

Figure~\ref{fig:thm2} shows opinion trajectories up to time \( t = 400 \).
Even under heavy-tailed initial conditions and only intermittent
connectivity, the system reaches consensus, corroborating the robustness of
Theorem~\ref{Thm:consensus under attraction only}.

\begin{figure}[htbp]
  \centering
  \begin{subfigure}[b]{0.49\textwidth}
    \centering
    \includegraphics[width=\textwidth]{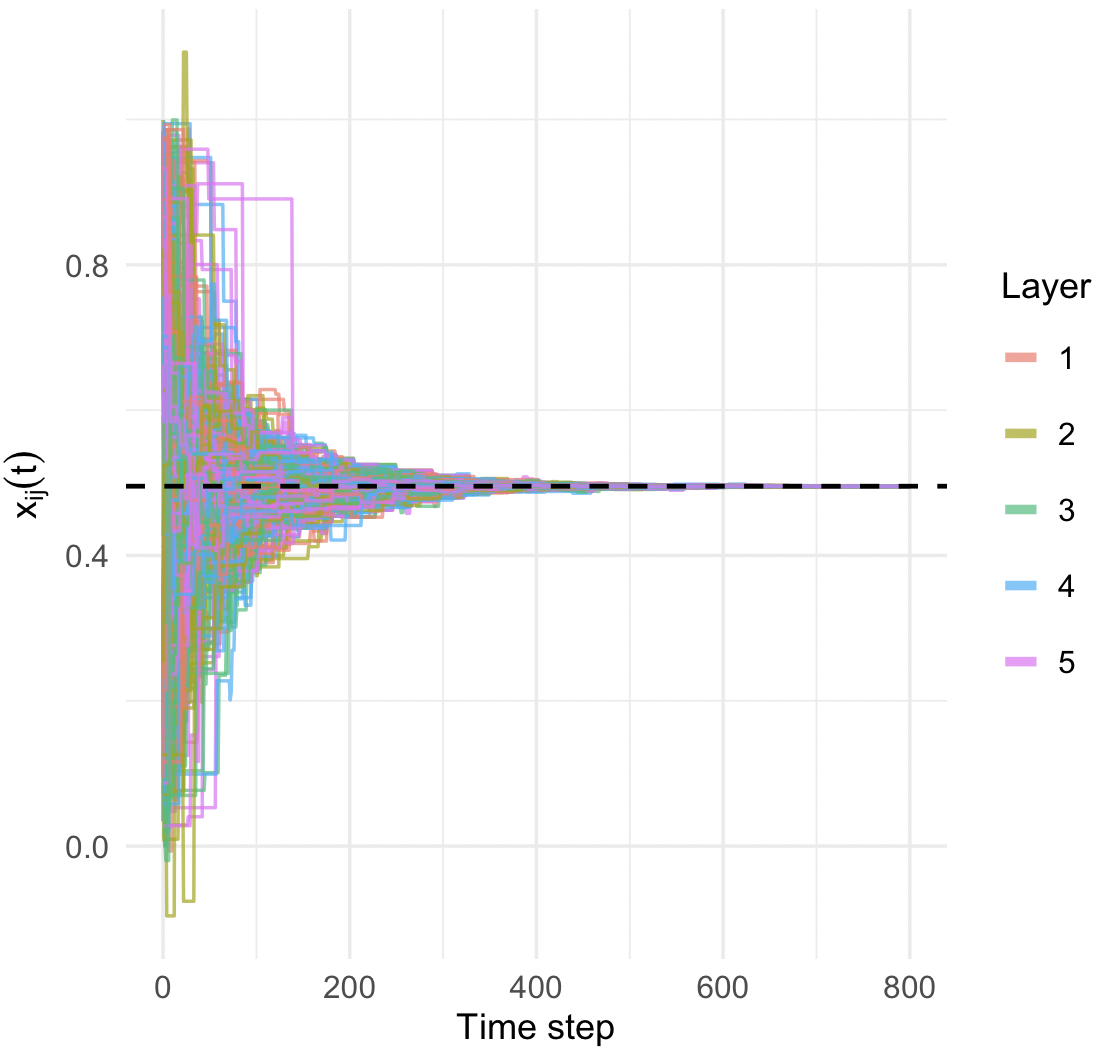}
    \caption{Attraction--repulsion dynamics}
    \label{fig:thm1}
  \end{subfigure}
  \hfill
  \begin{subfigure}[b]{0.49\textwidth}
    \centering
    \includegraphics[width=\textwidth]{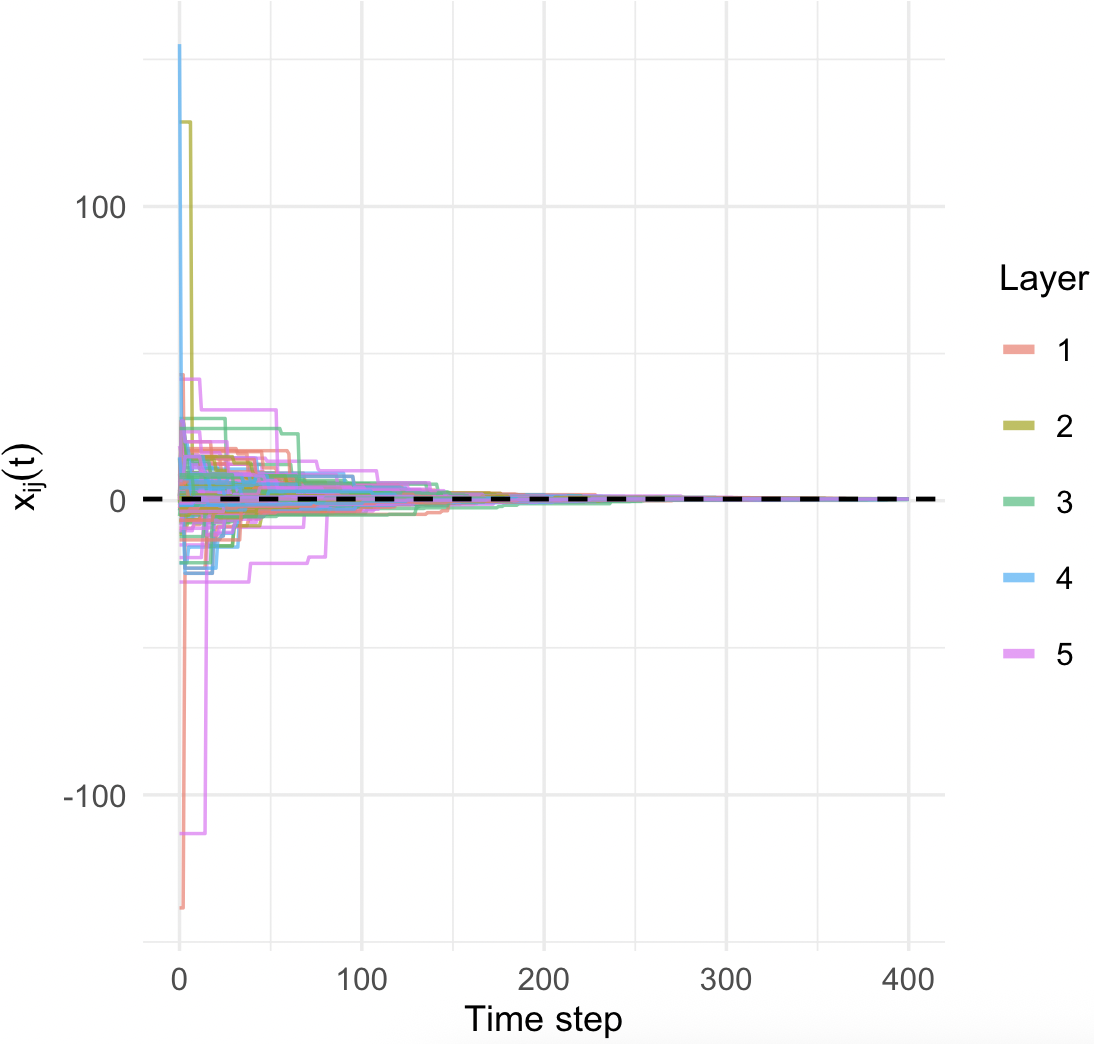}
    \caption{Pure-attraction dynamics}
    \label{fig:thm2}
  \end{subfigure}
  \caption{Evolution of multilayer opinions. In both panels, each colored trajectory corresponds to an individual agent--layer opinion \(x_{ki}(t)\), and the dashed horizontal line indicates the conserved global average \( \frac{1}{nm}\sum_{k=1}^{m}\sum_{i=1}^{n} x_{ki}(0) \).}
  \label{fig:thm}
\end{figure}

\section{Discussion and Further Study}

This work develops a rigorous framework for opinion dynamics on multilayer social networks with stochastic, time-varying, and possibly antagonistic interactions. By establishing almost sure consensus under both mixed attraction--repulsion and purely attractive regimes, the results clarify how global agreement can emerge despite heterogeneity across layers, randomness in interactions, and intermittent connectivity. The analysis highlights the roles of admissible interactions, connectivity across layers, and the average effect of social influence in shaping long-run collective behavior.

Several directions for further study naturally arise from this framework. One important extension is to relax the matching-based interaction structure to allow for simultaneous multi-agent interactions or weighted subgraph updates, which may better capture real-world communication patterns. Another direction is to investigate phase transitions between consensus, polarization, and persistent disagreement by characterizing sharp thresholds in the balance between attraction and repulsion. Incorporating endogenous network evolution, layer-dependent noise, or strategic agent behavior may further enrich the model and improve its descriptive power. Finally, extending the analysis to continuous-time dynamics or to settings with external information sources would broaden the applicability of the framework to a wider class of social and information systems.

\section{Statements and Declarations}

\subsection{Competing Interests}
The author is partially funded by an NSTC grant.

\subsection{Data Availability}
No associated data were used in this study.


\end{document}